\documentclass[reqno,11pt]{amsart}
\usepackage{amsmath,amssymb,latexsym,soul,cite,mathrsfs}
\usepackage{color,enumitem,graphicx}
\usepackage[colorlinks=true,urlcolor=blue,
citecolor=red,linkcolor=blue,linktocpage,pdfpagelabels,
bookmarksnumbered,bookmarksopen]{hyperref}
\usepackage[english]{babel}
\usepackage[left=2.8cm,right=2.8cm,top=2.8cm,bottom=2.8cm]{geometry}
\usepackage[hyperpageref]{backref}
\usepackage{lipsum}

 \numberwithin{equation}{section}

 \usepackage{mathtools, nccmath}

\usepackage{bbm}

\usepackage{tcolorbox}

\newtheorem{theorem}{Theorem}[section]
\newtheorem{lemma}[theorem]{Lemma}
\newtheorem{corollary}[theorem]{Corollary}

\theoremstyle{definition}

\theoremstyle{remark}

\newcommand{\T}{\mathbb T}
\newcommand{\R}{\mathbb R}
\newcommand{\cD}{\Gamma}
\newcommand{\cR}{\mathcal R}
\newcommand{\ep}{\varepsilon}

\usepackage{etoolbox}
\makeatletter
\patchcmd{\@maketitle}
  {\ifx\@empty\@dedicatory}
  {\ifx\@empty\@date \else {\vskip3ex \centering\footnotesize\@date\par\vskip1ex}\fi
   \ifx\@empty\@dedicatory}
  {}{}
\patchcmd{\@adminfootnotes}
  {\ifx\@empty\@date\else \@footnotetext{\@setdate}\fi}
  {}{}{}
\makeatother

\title{Global regularity of critical SQG via a pointwise Leibniz commutator formula}
\author[M. Caselli]{Michele Caselli}

\address[M. Caselli]{Princeton University --  Fine Hall, 304 Washington Rd, Princeton, NJ 08540, USA
	\newline\indent 
	University of Sydney -- Quadrangle A14, Camperdown, NSW 2006, Australia}
\email{mc3147@princeton.edu}

\date{\today}

\begin{document}

\begin{abstract}
We give a short proof of global regularity for the critical dissipative SQG equation in two dimensions. The argument is based on a pointwise representation of the Leibniz commutator for the half-Laplacian, recently obtained by the author with Luca Gennaioli, which simplifies and unifies the nonlinear estimates used in previous maximum principle proofs.
\end{abstract}

\maketitle

\section{Introduction}

In this work, we give a short proof of global regularity for the critical dissipative surface quasi-geostrophic (SQG) equation
\begin{equation}\label{eq:sqg-intro}
    \begin{cases}
        \partial_t\theta+u\cdot\nabla\theta+\Lambda\theta=0, \\ u=\cR^\perp\theta=(-R_2\theta,R_1\theta) , \\ \theta(\cdot,0)=\theta_0 , 
    \end{cases}
\end{equation}
on the two-dimensional periodic domain $\T^2=[-\pi, \pi]^2$. Here $\Lambda = (-\Delta)^{1/2}$ and $R_1, R_2$ are the Riesz transforms, with the convention \(R_j=\partial_j\Lambda^{-1}\) on nonzero Fourier modes and \(R_j=0\) on the zero mode, so that \(R_j\Lambda=\partial_j\).

The surface quasi-geostrophic (SQG) equation was introduced in \cite{CMT94} and, to date, its regularity theory has been developed through several methods. Global regularity for arbitrary smooth initial data was proved independently in \cite{KNV07} by a very elegant argument involving a preserved modulus of continuity, and in \cite{CV10} through a De Giorgi type iteration. The result in \cite{CV10} was also re-proved using elementary techniques \cite{KN09}. Later, global regularity was also recovered by nonlinear methods. Our proof is closer to the nonlinear maximum principle approach \cite{CV12} and follows, more specifically, the finite difference strategy of \cite{CTV15}, where one first propagates a small H\"older exponent by studying finite differences and then uses this estimate to control the gradient.

The novelty of our argument is that the nonlinear estimates required for the maximum principle proof are derived from the same pointwise Leibniz formula recently proved by the author with Luca Gennaioli in \cite{CG26}, which is, in turn, based on the Caffarelli--Silvestre extension \cite{CS07}. This observation replaces the separate ad hoc singular kernel decompositions used in earlier proofs and leads to a particularly short proof of global regularity. Precisely, we give a new proof of the following classical theorem.

\begin{theorem}\label{thm:main}
For every \(\theta_0\in C^\infty(\T^2)\), \eqref{eq:sqg-intro}
has a unique solution $  \theta\in C^\infty(\T^2\times[0,\infty))$, and
\[
 \sup_{t\ge0}\|\nabla\theta(t)\|_{L^\infty} \le C\big(\|\theta_0\|_{L^\infty} , \|\nabla \theta_0\|_{L^\infty} \big) . 
\]
\end{theorem}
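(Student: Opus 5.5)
We follow the finite-difference nonlinear maximum principle strategy of \cite{CTV15}, using the pointwise Leibniz commutator formula of \cite{CG26} for all nonlinear lower bounds. Local well-posedness of smooth solutions on $\T^2$ is classical, together with a continuation criterion: the solution stays smooth as long as $\|\nabla\theta\|_{L^\infty}$ (or even $\|\theta\|_{C^\alpha}$ for some $\alpha>0$, in the critical case) remains bounded. Uniqueness in $C^\infty$ follows from an $L^2$ energy estimate for the difference of two solutions. It therefore suffices to prove the a priori bound on $\nabla\theta$. The maximum principle gives $\|\theta(t)\|_{L^\infty}\le\|\theta_0\|_{L^\infty}$, using the C\'ordoba--C\'ordoba inequality $\varphi'(\theta)\Lambda\theta\ge\Lambda\varphi(\theta)$, or equivalently the pointwise formula at a maximum point.

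\textbf{Step 1 (small H\"older propagation).} Set $\delta_h\theta(x,t)=\theta(x+h,t)-\theta(x,t)$ and consider
\[
v(x,h,t)=\frac{|\delta_h\theta(x,t)|^2}{(\xi(t)^2+|h|^2)^{\alpha}},
\]
with small $\alpha>0$ and a time-dependent regularization $\xi(t)$ as in \cite{CTV15}. Then $\delta_h\theta$ satisfies
\[
(\partial_t+u\cdot\nabla_x+(\delta_hu)\cdot\nabla_h+\Lambda)\delta_h\theta=0 .
\]
Apply the pointwise Leibniz formula to write
\[
2\,\delta_h\theta\,\Lambda\delta_h\theta=\Lambda(\delta_h\theta)^2+D(\delta_h\theta),
\]
where $D(f)(x)=c\int|f(x)-f(y)|^2|x-y|^{-3}\,dy$ is the nonnegative dissipation density, in extension form, given by \cite{CG26}. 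Two lower bounds on $D$ are needed. The first is $D(\delta_h\theta)\gtrsim |\delta_h\theta|^3/(|h|\,\|\theta\|_{L^\infty})$, obtained by restricting the integral to $|y-x|\ge R$ with $R\sim|h|\,\|\theta\|_\infty/|\delta_h\theta|$. The second bound handles the nonlocal drift term $\delta_hu\cdot\nabla_h$: we control $|\delta_hu(x)|$ by splitting the Riesz kernel at a radius $\rho$, bounding the inner part through $D(\delta_h\theta)^{1/2}\rho^{1/2}$ and the outer part through $\|\theta\|_{\infty}|h|/\rho$. This is the Cauchy--Schwarz estimate that the commutator formula makes immediate. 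At a maximum point of $v$ in $(x,h)$ we have $\nabla_xv=\nabla_hv=0$ and $\Lambda_x v\ge0$. The $\nabla_h$ term contributes $\alpha|\delta_hu|\,v/|h|$, which is absorbed into the dissipation because $\alpha$ is small. This yields $\partial_t v\le0$ at levels above a threshold depending on $\|\theta_0\|_\infty$, so $\theta\in C^{\alpha}$ uniformly in time. The choice of $\xi(t)$, which decreases to $0$ in finite time as long as $\|\nabla\theta_0\|_\infty$ controls the initial values, takes care of small $|h|$. This absorption step, in particular matching the sizes of the drift term and the cubic dissipation uniformly in $|h|$, is the main obstacle.

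\textbf{Step 2 (gradient bound).} Differentiate the equation and consider $|\nabla\theta|^2$:
\[
(\partial_t+u\cdot\nabla+\Lambda)|\nabla\theta|^2+D(\nabla\theta)=-2\,\partial_k\theta\,\partial_ku\cdot\nabla\theta .
\]
The key pointwise estimate is $|\nabla u(x)|\lesssim \rho^{1/2}D(\nabla\theta)(x)^{1/2}+\rho^{\alpha-1}\|\theta\|_{C^\alpha}$. It follows by splitting $\nabla u=\cR^\perp\nabla\theta$ into a near part, handled by Cauchy--Schwarz against $D(\nabla\theta)$, and a far part, where we integrate by parts onto $\theta$ and use the H\"older bound. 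The lower bound $D(\nabla\theta)\gtrsim|\nabla\theta|^3/\|\theta\|_{L^\infty}$, or its H\"older improvement $|\nabla\theta|^{3-\alpha}/\|\theta\|_{C^\alpha}^{1-\alpha}$... more precisely $D(\nabla\theta)\gtrsim |\nabla\theta|^{2+1/(1-\alpha)}\|\theta\|_{C^\alpha}^{-1/(1-\alpha)}$, is obtained as in Step 1. We optimize in $\rho$ and apply Young's inequality. At a maximum of $|\nabla\theta|^2$ this shows that the right-hand side is dominated by $\tfrac12D(\nabla\theta)+C$, so $\|\nabla\theta(t)\|_\infty\le \max(\|\nabla\theta_0\|_\infty, C(\|\theta_0\|_\infty,\|\theta\|_{C^\alpha}))$. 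Combined with Step 1 and the continuation criterion, this gives global smoothness and the stated bound.
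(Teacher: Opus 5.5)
Your proof is correct in substance and has the same architecture as the paper's. You propagate a small H\"older exponent $\alpha\lesssim 1/\|\theta_0\|_{L^\infty}$ through the equation for the increments. You then close a maximum principle for $|\nabla\theta|^2$ using $|\nabla u|\lesssim\sqrt{z\,\Gamma[\nabla\theta]}+[\theta]_{C^\alpha}z^{\alpha-1}$ and $\Gamma[\nabla\theta]\gtrsim|\nabla\theta|^{2+\frac{1}{1-\alpha}}[\theta]_{C^\alpha}^{-\frac{1}{1-\alpha}}$.

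The genuine difference is how the nonlinear estimates are produced. Despite the attribution, you never use the extension identity of \cite{CG26}. Your density $D(f)=c\int|f(x)-f(y)|^2|x-y|^{-3}\,dy$ is the classical C\'ordoba--C\'ordoba/Constantin--Vicol representation. Both of your lower bounds come from truncating the kernels of $\Lambda$ and $\mathcal{R}$ at a scale $R$ or $\rho$, using Cauchy--Schwarz against the kernel on the inner part, and moving the increment onto the outer kernel. These are precisely the splittings of \cite{CV12,CTV15} that the paper replaces. The paper instead starts from $\Gamma[f]=\int_0^\infty P_z(|\widetilde\nabla F(\cdot,z)|^2)\,dz$. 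Via $\widetilde\nabla F(2s)=P_s\widetilde\nabla F(s)$, Jensen, and Cauchy--Schwarz in the extension variable, it gets in a few lines both $\Gamma[f]\ge|f-P_{2R}f|^2/(4R)$ and $|\mathcal{R}(f-P_zf)|\lesssim\sqrt{z\,\Gamma[f]}$. The Poisson semigroup then plays the role of your cutoff, and the low-frequency remainders reduce to the standard Poisson kernel bounds of the appendix, with periodicity built in.

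Your route is elementary, independent of \cite{CG26}, and works for general kernel-defined dissipations as in \cite{CV12}. On $\T^2$, however, it needs the truncation estimate $\|K(\cdot-h)-K\|_{L^1}\lesssim|h|/\rho$ for all $h$, and some care with the Riesz tail, which converges only conditionally against periodic functions. The extension route avoids both and gets coercivity and drift control from one identity.

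One point should be corrected. The time-dependent $\xi(t)$, and the threshold that arises only because $\partial_t$ falls on the weight, are unnecessary for smooth data. Moreover, $\xi$ does not ``take care of small $|h|$''. While $\xi(t)>0$, a bound on $v$ only gives $|\delta_h\theta|\lesssim(\xi^2+|h|^2)^{\alpha/2}$, which is no $C^\alpha$ control, so your Step~2 could not be run before $\xi$ vanishes.

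Take $\xi\equiv0$ instead. Set $M=\|\theta_0\|_{L^\infty}$. The bounds $|\delta_hu|\lesssim(\Gamma[\delta_h\theta]M|h|)^{1/3}$ and $|\delta_h\theta|^2\lesssim(\Gamma[\delta_h\theta]M|h|)^{2/3}$ have exponents summing to one. Hence the weight term is at most $C_0\alpha M\,\Gamma[\delta_h\theta]|h|^{-2\alpha}$, and $v$ is a subsolution everywhere once $\alpha\lesssim 1/M$, with no threshold. The regime $h\to0$ is handled by smoothness on $[0,T]$ for $T<T_{\max}$, since $v\le\sup_{[0,T]}\|\nabla\theta\|_{L^\infty}^2|h|^{2-2\alpha}$. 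The regime $|h|\to\infty$ follows from $v\le4M^2|h|^{-2\alpha}$. Together these are exactly the hypotheses of Lemma~\ref{lem:joint-maximum}.

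Finally, your direct evaluation at a spatial maximum of $|\nabla\theta|^2$ in Step~2 is equivalent to the paper's use of $(|\nabla\theta|^2-A)_+^3$ with the C\'ordoba--C\'ordoba inequality.
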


\subsection{Main idea and proof outline}

 Let $P_z=e^{-z\Lambda}$ be the Poisson semigroup on $\T^2$ and let $F(x,z)=P_zf(x)$ be the Caffarelli--Silvestre extension of $f$, which is the harmonic extension to $\T^2 \times [0, +\infty)$ in this case. Here we identify the periodic domain $\T^2$ with the closed Riemannian manifold endowed with its standard flat metric. Under this identification, the fractional Laplacian obtained by periodizing the fractional Laplacian on $\R^2$ coincides with the spectral fractional Laplacian on $\T^2$; see \cite{RoncalStinga16}. Then, by \cite[Theorem 1.2]{CG26} the Leibniz commutator 
\begin{equation}\label{eq: Gamma def}
 \cD[f]:=f\cdot\Lambda f-\frac12\Lambda|f|^2
\end{equation}
can be written as
\begin{equation}\label{eq: CG Leibniz}
 \cD[f](x)=\int_0^\infty
 P_z\bigl(|\widetilde\nabla F(\cdot, z)|^2\bigr)(x) \, dz ,
\end{equation}
where $\widetilde\nabla=(\nabla_x,\partial_z)$. The commutator $\Gamma[f]$ and its nonnegative singular integral representation are very classical; the new ingredient from \cite{CG26} is the exact extension representation \eqref{eq: CG Leibniz}. Expressing the Leibniz commutator in this way easily yields two key consequences: for every $R,z>0$
\begin{equation}\label{eq: point for consequences}
    \cD[f](x)\ge \frac{|f(x)-P_{2R}f(x)|^2}{4R}, \qquad \mbox{and} \qquad |\cR(f-P_zf)(x)|\lesssim \sqrt{z\cD[f](x)} .  
\end{equation}
The first estimate provides the (pointwise) nonlinear coercivity of the dissipation, while the second one controls the high-frequency part of the Riesz transform. With these estimates, we recover both the finite difference and gradient estimates needed for the argument (see Corollary \ref{cor:nonlinear}). 

In \cite{CTV15}, see also \cite{CZV16}, these types of estimates are obtained from the singular-integral representations of $\Lambda$ and the Riesz transforms by splitting the kernels at a suitable scale. Here these kernel computations are replaced by the exact extension identity above. Thus both the coercivity of the dissipation and the control of the drift follow from \eqref{eq: CG Leibniz}.

From here, the proof of Theorem \ref{thm:main} is then quite straightforward. For $h\in\R^2$, we consider the normalized finite differences
\begin{equation*}
 v(x,h,t):=\frac{\theta(x+h,t)-\theta(x,t)}{|h|^\alpha}, 
\end{equation*}
so that a uniform bound on $\sup_{x,h}|v|$ is equivalent to a bound on $[\theta]_{C^\alpha}$. Computing the equation for $v^2$ we see that it contains a ``bad" term involving the velocity increment $\tau_hu$. Combining the two consequences of the extension formula \eqref{eq: point for consequences}, and choosing $\alpha>0$ sufficiently small in terms of $\|\theta_0\|_{L^\infty}$, we absorb this term into the one involving $\Gamma[\tau_h\theta]$ which has a sign. We obtain a closed maximum principle for $v^2$, which propagates the $C^\alpha$ seminorm of the initial datum.

Lastly, we differentiate \eqref{eq:sqg-intro}. The same extension estimates, now applied to $\nabla\theta$, show that the nonlinear dissipation dominates the stretching term whenever $|\nabla\theta|$ is large. Then, a suitable maximum principle argument yields a uniform Lipschitz bound, and a standard continuation criterion in time concludes.

\section{Consequences of the Leibniz commutator formula}\label{sec:defect}

The next lemma is the main consequence of the extension formula \eqref{eq: CG Leibniz}.

\begin{lemma}\label{lem:tube}
Let $f \in C^\infty(\T^2; \R^k )$, $F(x,z)=P_z f(x)$ be its (componentwise) extension, and $\Gamma[f]$ be the Leibniz commutator \eqref{eq: Gamma def} summed over the components of $f$. Then, for every $R,z>0$ and every $x\in\T^2$,
\begin{align}
 \cD[f](x)
 &\ge \frac{|f(x)-P_{2R}f(x)|^2}{4R},
 \label{eq:tube-coercivity}\\
 \big|\cR(f-P_zf)(x)\big|
 &\lesssim \sqrt{z\cD[f](x)}.
 \label{eq:riesz-high}
\end{align}

\end{lemma}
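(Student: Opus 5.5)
Both inequalities will follow from \eqref{eq: CG Leibniz} once we have a single pointwise lower bound for $\cD[f](x)$ in terms of the extension $F$ restricted to the vertical half-line above $x$. The key tool is Jensen's inequality for the Poisson semigroup. On $\T^2$, $P_z$ is convolution with the periodized Poisson kernel. This kernel is strictly positive and has unit mass, so $P_z$ is a Markov operator. Hence, for vector-valued $G$ and each fixed $x$, Jensen gives $P_z(|G|^2)(x)\ge |P_zG(x)|^2$.

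Apply this with $G=\widetilde\nabla F(\cdot,z)$. The operator $P_z$ commutes with $\nabla_x$ and with $\partial_z$, and the semigroup property gives $P_zF(\cdot,z)=F(\cdot,2z)$. Therefore
\[
P_z\bigl(|\widetilde\nabla F(\cdot,z)|^2\bigr)(x)\ge |\widetilde\nabla F(x,2z)|^2 .
\]
Integrating in $z$ and substituting $s=2z$ yields the pointwise estimate
\begin{equation*}
\cD[f](x)\ge \frac12\int_0^\infty |\widetilde\nabla F(x,s)|^2\,ds .
\end{equation*}
For smooth $f$ all integrals converge and exchanges of order are justified, since the nonzero modes of $F$ decay exponentially in $s$.

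\emph{Proof of \eqref{eq:tube-coercivity}.} Write $f(x)-P_{2R}f(x)=-\int_0^{2R}\partial_sF(x,s)\,ds$. Cauchy--Schwarz on $[0,2R]$ gives
\[
|f(x)-P_{2R}f(x)|^2\le 2R\int_0^{2R}|\partial_sF(x,s)|^2\,ds\le 2R\int_0^\infty|\widetilde\nabla F(x,s)|^2\,ds\le 4R\,\cD[f](x).
\]
This is exactly \eqref{eq:tube-coercivity}, with the stated constant.

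\emph{Proof of \eqref{eq:riesz-high}.} Use $\partial_sF=-\Lambda F$ and the convention $R_j\Lambda=\partial_j$. On the zero mode both sides vanish, so the identity $R_j\partial_sF(\cdot,s)=-\partial_jF(\cdot,s)$ holds on all modes. Consequently
\[
R_j(f-P_zf)(x)=-\int_0^z R_j\partial_sF(x,s)\,ds=\int_0^z\partial_jF(x,s)\,ds .
\]
Here $R_j$ acts componentwise on $f$. Cauchy--Schwarz on $[0,z]$, summed over $j$ and over components, then gives
\[
|\cR(f-P_zf)(x)|^2\le z\int_0^z|\nabla_xF(x,s)|^2\,ds\le 2z\,\cD[f](x).
\]

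The only delicate point is the Jensen step. It needs $P_z$ to be given by a nonnegative kernel of total mass one on $\T^2$, which holds for the periodized Poisson kernel. It also needs the semigroup identity $P_zP_z=P_{2z}$ to be applied correctly to the extension. Everything after that is one-dimensional Cauchy--Schwarz along the vertical ray above $x$.
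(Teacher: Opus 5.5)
Your proof is correct and follows essentially the same route as the paper. You use the semigroup identity $\widetilde\nabla F(\cdot,2s)=P_s\widetilde\nabla F(\cdot,s)$ together with Jensen for the Markov operator $P_s$ to get $\cD[f](x)\ge\frac12\int_0^\infty|\widetilde\nabla F(x,s)|^2\,ds$, and then apply one-dimensional Cauchy--Schwarz along the vertical ray (with $R_j\Lambda=\partial_j$ for the Riesz part), arriving at the same constants $4R$ and $2z$; the only cosmetic difference is that you isolate this ray bound once and reuse it, while the paper repeats the Jensen step inline for \eqref{eq:riesz-high}.
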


\begin{proof}
With a little abuse of notation, we drop the dependence on the point $x \in \T^2$, which is fixed throughout the proof. By the semigroup property 
\begin{equation*}
 \widetilde\nabla F(2s)=P_s\widetilde\nabla F(s).
\end{equation*}
Indeed, this is immediate for spatial derivatives, while for the vertical derivative
\begin{equation*}
 (\partial_z F)(2s)=-\Lambda F(2s)=-P_s\Lambda F(s)
 =P_s(\partial_z F)(s).
\end{equation*}
Hence, Jensen's inequality gives
\begin{equation*}
 |\widetilde\nabla F(2s)|^2 = |P_s \widetilde\nabla F(s)|^2
 \le P_s\bigl(|\widetilde\nabla F(s)|^2\bigr).
\end{equation*}
Consequently, \eqref{eq: CG Leibniz} and Cauchy--Schwarz in the vertical
variable imply
\begin{align*}
    \cD[f]  =\int_0^\infty
 P_z \big(|\widetilde\nabla F(z)|^2\big) \, dz & \ge \int_0^\infty |\widetilde\nabla F(2z)|^2 \, dz \\ & \ge \int_0^R  |\partial_z F(2z)|^2 \, dz = \frac{1}{2} \int_0^{2R} |\partial_s F(s)|^2 \, ds  \ge \frac{|f-F(2R)|^2}{4R},
 \end{align*}
which proves \eqref{eq:tube-coercivity}.

For the Riesz part, using that \(R_j\Lambda=\partial_j\) we can write
\[
 R_j(f-F(z))=  - R_j \int_0^z \partial_s F(s) \, ds = \int_0^z R_j \Lambda F(s) \, ds = \int_0^z \partial_j F(s) \, ds . 
\]
Then, the same semigroup and Jensen estimate described above, now applied to horizontal derivatives, yields
\begin{equation*}
    | \mathcal{R} (f-F(z))|^2 \le z \int_0^z |\nabla F(s)|^2 \, ds \le 2 z \int_0^{z/2} P_s \big( |\nabla F(s)|^2 \big) \, ds \le 2 z \Gamma[f], 
\end{equation*}
which gives \eqref{eq:riesz-high}. 
\end{proof}

For finite differences, we identify functions on $\mathbb T^2$ with their
$2\pi$-periodic extensions to $\mathbb R^2$ and, for $h\in\mathbb R^2$, we write
\[
\tau_h \theta(x)=\theta(x+h)-\theta(x).
\]

The estimates collected in the following corollary are already contained, in essentially equivalent form, in the nonlinear lower bounds in \cite[Eq.~(3.41) and Remark~3.5]{CV12}, \cite[Eq.~(4.16), (4.25), and (5.16)]{CTV15}, and \cite[Lemmas~3.1 and~3.3]{CZV16}. Here we give a short unified derivation from our pointwise identity \eqref{eq: CG Leibniz}. 

\begin{corollary}\label{cor:nonlinear}
Let $\theta \in C^\infty(\T^2)$ be non-constant. Let $u=\cR^\perp\theta$ and $\Gamma $ be the Leibniz commutator \eqref{eq: Gamma def}. Then, for $h\in\R^2\setminus\{0\}$, 
\begin{align}
 \cD[\tau_h \theta](x)& \gtrsim \frac{|\tau_h\theta(x)|^3}{ \|\theta\|_{L^\infty}|h|},
 \label{eq:difference-lower}\\
 |\tau_h u(x)|&\lesssim  
       \sqrt{z\cD[\tau_h\theta](x)}+\frac{ \|\theta\|_{L^\infty} |h|}{z} ,\qquad \forall \, z>0.
 \label{eq:difference-velocity}
\end{align}
Moreover, if $0<\alpha<1$ then
\begin{align}
 \cD[\nabla \theta](x)& \gtrsim_\alpha \frac{|\nabla \theta(x)|^{2+\frac{1}{1-\alpha}}}
 {[\theta]_{C^\alpha}^{\frac{1}{1-\alpha}}},
 \label{eq:gradient-lower}\\
 |\nabla u(x)|& \lesssim_\alpha 
       \sqrt{z \cD[\nabla \theta](x)}+[\theta]_{C^\alpha}z^{\alpha-1}
   ,\qquad \forall \, z>0.
 \label{eq:strain}
\end{align}
\end{corollary}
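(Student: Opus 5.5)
All four estimates follow from Lemma \ref{lem:tube} applied to $f=\tau_h\theta$ or $f=\nabla\theta$. Each time we split $f=(f-P_zf)+P_zf$: the high-frequency part is handled by the lemma, and the low-frequency part $P_zf$ by smoothing estimates for the Poisson kernel.

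\emph{Lower bound \eqref{eq:difference-lower}.} Apply \eqref{eq:tube-coercivity} to $f=\tau_h\theta$, which gives
\[
\Gamma[\tau_h\theta]\ge \frac{|\tau_h\theta-P_{2R}\tau_h\theta|^2}{4R}.
\]
Since $P_{2R}$ commutes with translations, $P_{2R}\tau_h\theta=\tau_h P_{2R}\theta$. The gradient bound $\|\nabla P_{2R}\theta\|_{L^\infty}\lesssim \|\theta\|_{L^\infty}/R$ then yields
\[
|P_{2R}\tau_h\theta|\le C_0|h|\,\|\theta\|_{L^\infty}/R.
\]
This gradient bound holds on $\T^2$ uniformly in $R$: for small $R$ it comes from the scaling of the kernel, and for large $R$ the mean is annihilated by $\nabla$. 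Choose
\[
R=2C_0|h|\,\|\theta\|_{L^\infty}/|\tau_h\theta(x)|,
\]
which is permitted when $\tau_h\theta(x)\ne0$; otherwise the claim is trivial since $\Gamma\ge0$. With this choice $|P_{2R}\tau_h\theta|\le|\tau_h\theta(x)|/2$. Hence
\[
\Gamma[\tau_h\theta]\ge \frac{|\tau_h\theta|^2}{16R}\gtrsim \frac{|\tau_h\theta|^3}{|h|\,\|\theta\|_{L^\infty}}.
\]

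\emph{Velocity bound \eqref{eq:difference-velocity}.} Write $\tau_hu=\cR^\perp(\tau_h\theta-P_z\tau_h\theta)+\cR^\perp P_z\tau_h\theta$. The first term is bounded by \eqref{eq:riesz-high}. For the second, $\cR^\perp P_z\tau_h\theta=\tau_h(\cR^\perp P_z\theta)$, so it is at most $|h|\,\|\nabla\cR P_z\theta\|_{L^\infty}$. The kernel of $\nabla\cR P_z$ has $L^1$ norm $\lesssim 1/z$. On $\R^2$ its symbol is homogeneous, $|\xi|e^{-z|\xi|}$ times a Riesz multiplier, so scaling reduces the claim to $z=1$, where the kernel is smooth and decays like $|x|^{-3}$; periodization keeps the $L^1$ bound. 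This gives the term $\|\theta\|_{L^\infty}|h|/z$.

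\emph{Gradient estimates \eqref{eq:gradient-lower} and \eqref{eq:strain}.} Take $f=\nabla\theta$ and use Hölder information in place of $L^\infty$. Since $\nabla P_{2R}$ kills constants,
\[
|\nabla P_{2R}\theta(x)|=\Bigl|\int \nabla_x p_{2R}(x-y)\,(\theta(y)-\theta(x))\,dy\Bigr|\lesssim_\alpha [\theta]_{C^\alpha}R^{\alpha-1},
\]
using $\int|\nabla p_{R}(y)|\,|y|^\alpha dy\lesssim R^{\alpha-1}$. Choose $R$ with $C[\theta]_{C^\alpha}R^{\alpha-1}=|\nabla\theta(x)|/2$, that is,
\[
R\sim \bigl([\theta]_{C^\alpha}/|\nabla\theta|\bigr)^{1/(1-\alpha)}.
\]
Then \eqref{eq:tube-coercivity} gives $\Gamma[\nabla\theta]\gtrsim|\nabla\theta|^2/R$, which is exactly \eqref{eq:gradient-lower}. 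Similarly, $\nabla u=\cR^\perp(\nabla\theta-P_z\nabla\theta)+\nabla\cR^\perp P_z\theta$. The first term is handled by \eqref{eq:riesz-high}, with components summed over. The second term is $\int \nabla K_z(x-y)(\theta(y)-\theta(x))\,dy$, where $K_z$ is the kernel of $\cR^\perp P_z$; we again subtract $\theta(x)$ because $\nabla$ kills constants. Then $\int|\nabla K_z(y)||y|^\alpha dy\lesssim_\alpha z^{\alpha-1}$, since $|\nabla K_z(y)|\lesssim (z+|y|)^{-3}$ and $\alpha<1$ makes the integral converge at infinity.

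\emph{Main obstacle.} The delicate points are the kernel bounds on $\T^2$: the $L^1$ and weighted-$L^1$ estimates for $\nabla p_z$ and $\nabla K_z$, uniformly for all $z>0$, including large $z$ compared to the torus size. I would first prove them on $\R^2$ by scaling and pointwise decay $(z+|y|)^{-3}$, and then periodize. For large $z$ the $\R^2$ weighted estimate fails to transfer directly, so I would instead use that the periodic kernel there is a smooth mean-zero perturbation. Its $\nabla$ decays exponentially like $e^{-z}$, which is $\lesssim z^{\alpha-1}$, and the oscillation of $\theta$ is bounded by $C[\theta]_{C^\alpha}$ on the torus. Non-constancy of $\theta$ only ensures $\|\theta\|_{L^\infty}$ and $[\theta]_{C^\alpha}$ are nonzero in the denominators.
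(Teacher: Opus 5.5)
Your proposal is correct and follows the paper's proof essentially step by step. It uses the same choice of $R$, making the Poisson part at most $|f(x)|/2$ in \eqref{eq:tube-coercivity}, and the same high/low-frequency splitting, with \eqref{eq:riesz-high} for the high part and Poisson--Riesz kernel bounds for the low part.

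The only difference is in how you prove those kernel bounds. The paper writes $\nabla R_jP_z f=\int_z^\infty\nabla\partial_jP_sf\,ds$ and periodizes using $|y|\le|y+2\pi k|$ on the fundamental domain. This gives the weighted $L^1$ estimates uniformly for all $z>0$. So your separate large-$z$ argument is valid but unnecessary.
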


\begin{proof}
By \eqref{eq:tube-coercivity} applied to $f=\tau_h \theta$ we have 
\begin{equation*}
 \cD[\tau_h\theta](x)
 \ge \frac{|\tau_h\theta(x)-P_{2R}(\tau_h\theta)(x)|^2}{4R}.
\end{equation*}
Since $P_z(\tau_h\theta)=\tau_hP_z\theta$, the estimate in \eqref{eq:poisson-Linf} gives
\begin{equation*}
 |P_{2R}(\tau_h\theta)(x)|
 \lesssim  |h|\|\nabla P_{2R}\theta\|_{L^\infty}
 \lesssim \frac{ \|\theta\|_{L^\infty} |h|}{R}.
\end{equation*}
Let $A=|\tau_h\theta(x)|$. If $A=0$, there is nothing to prove. Otherwise, choose $R=C\|\theta\|_{L^\infty}|h|/A$, where $C>0$ is a sufficiently large universal constant. Then $|P_{2R}(\tau_h\theta)(x)|\le A/2$, and hence
\begin{equation*}
 \cD[\tau_h\theta](x)\gtrsim \frac{A^2}{R}
 = \frac{A^3}{C \|\theta\|_{L^\infty}|h|},
\end{equation*}
which is \eqref{eq:difference-lower}. 

Next, we split
\begin{equation*}
 \tau_hu
 =\cR^\perp(\tau_h\theta-P_z\tau_h\theta)
 +\tau_h\cR^\perp P_z\theta.
\end{equation*}
The first term can be bounded by \eqref{eq:riesz-high}, while the second term is at most
\(C|h|\|\nabla\cR P_z\theta\|_{L^\infty}\), so the estimate in
\eqref{eq:poisson-Linf} proves \eqref{eq:difference-velocity}.

For the gradient estimate we argue similarly. If $g:=|\nabla \theta (x)|=0$ the lower bound is trivial. Otherwise, we have $[\theta]_{C^\alpha}>0$ and by \eqref{eq:poisson-holder} 
\begin{equation*}
 |P_{2R} \nabla \theta (x)|\le C_\alpha [\theta]_{C^\alpha} R^{\alpha-1}.
\end{equation*}
Choose $R=(C_\alpha [\theta]_{C^\alpha}/g)^{1/(1-\alpha)}$, where $C_\alpha>0$ is sufficiently large so that $|P_{2R} \nabla \theta (x)|\le g/2$. Then \eqref{eq:tube-coercivity} proves \eqref{eq:gradient-lower}. Finally,
\begin{equation*}
 \nabla u=\cR^\perp \nabla \theta
 =\cR^\perp(\nabla \theta -P_z \nabla \theta)+\nabla\cR^\perp P_z \theta;
\end{equation*}
use \eqref{eq:riesz-high} and \eqref{eq:poisson-holder} to obtain
\eqref{eq:strain}.
\end{proof}

\section{Proof of Theorem \ref{thm:main}}

\subsection{The finite difference maximum principle}\label{sec:finite-differences}

We now use the two estimates from Corollary~\ref{cor:nonlinear}
to obtain a uniform H\"older bound in time for $\theta$. All estimates below are first made on the maximal interval of existence of a
smooth solution $[0, T_{\max})$, and the bounds are independent of the endpoint $T_{\max}$. We study the finite difference
\begin{equation*}
 v(x,h,t)=\frac{\tau_h\theta(x,t)}{|h|^\alpha},
 \qquad x\in\T^2,\quad h\in\R^2\setminus\{0\}.
\end{equation*}

Let $\alpha \in (0,1)$ to be chosen later. By the maximum principle (see \cite[Theorem 4.1]{CC04})
\begin{equation}\label{eq:Linf}
\sup_{t \in [0, T_{\max})} \|\theta(t)\|_{L^\infty}
= M:=\|\theta_0\|_{L^\infty}.
\end{equation}
If $\theta_0$ is constant there is nothing to prove, hence we assume $M>0$ and $[\theta_0]_{C^\alpha} > 0 $ throughout the proof. Subtracting the equation at $x+h$ and $x$ yields
\begin{equation*}
L_h (\tau_h\theta  ) :=  \left(
\partial_t
+u(x)\cdot\nabla_x
+\tau_h u(x)\cdot\nabla_h
+\Lambda_x
\right)\tau_h\theta=0.
\end{equation*}
Since the weight $|h|^{-2\alpha}$ is independent of $x$ and $t$, we have 
\begin{align*}
    L_h v^2 & = L_h \left( \frac{|\tau_h\theta(x,t)|^2}{|h|^{2\alpha}} \right) \\ &= \frac{1}{|h|^{2\alpha}} \Big( \partial_t |\tau_h\theta|^2 + u \cdot \nabla_x |\tau_h\theta|^2 + \Lambda_x |\tau_h\theta|^2 \Big) + \tau_h u \cdot \left( \frac{\nabla_h |\tau_h\theta|^2}{|h|^{2\alpha}} + \nabla_h(|h|^{-2\alpha})|\tau_h\theta|^2 \right) \\ &= \frac{2 \tau_h \theta}{|h|^{2\alpha}} \Big( \partial_t \tau_h\theta + u \cdot \nabla_x \tau_h\theta + \tau_h u \cdot \nabla_h \tau_h \theta \Big) + \frac{1}{|h|^{2\alpha}}\Lambda_x |\tau_h\theta|^2 + \tau_h u \cdot \nabla_h(|h|^{-2\alpha})|\tau_h\theta|^2 \\&=  \frac{2 \tau_h \theta}{|h|^{2\alpha}} L_h (\tau_h \theta) - \frac{2}{|h|^{2\alpha}} \Gamma[\tau_h \theta] + \tau_h u \cdot \nabla_h(|h|^{-2\alpha})|\tau_h\theta|^2 \\ &=  - \frac{2}{|h|^{2\alpha}} \Gamma[\tau_h \theta] + \tau_h u \cdot \nabla_h(|h|^{-2\alpha})|\tau_h\theta|^2 , 
\end{align*}
where we have used that $L_h(\tau_h \theta)=0$ in the last line.

By the elementary fact that, for $A,B \ge 0$, 
\begin{equation*}
    \inf_{z > 0} \left(\sqrt{Az} + \frac{B}{z}\right)  = 3 \left( \frac{AB}{4}\right)^{\frac{1}{3}} , 
\end{equation*}
optimizing the right-hand side of \eqref{eq:difference-velocity} gives
\begin{equation*}
    |\tau_h u| \lesssim \big(\Gamma[\tau_h \theta] M |h|\big)^{\frac{1}{3}} . 
\end{equation*}
Moreover, by \eqref{eq:difference-lower} we also have
\begin{equation*}
    |\tau_h \theta|^2  \lesssim \big(\Gamma[\tau_h \theta] M |h|\big)^{\frac{2}{3}} . 
\end{equation*}
Hence, we can estimate 
\begin{align*}
    |\tau_h u \cdot \nabla_h(|h|^{-2\alpha}) | |\tau_h\theta|^2 \lesssim \frac{\alpha}{|h|^{2\alpha+1}}  \cdot \big(\Gamma[\tau_h \theta] M |h|\big)^{\frac{1}{3}+\frac{2}{3}}  =  C_0 \alpha M \frac{\Gamma[\tau_h \theta]}{|h|^{2\alpha}} . 
\end{align*}
Choosing $\alpha = \min\left\{ \frac{1}{10}, \frac{1}{10C_0M} \right\}$ gives 
\begin{align*}
    L_h v^2 \le  - \frac{2}{|h|^{2\alpha}} \Gamma[\tau_h \theta] + \frac{1}{|h|^{2\alpha}} \Gamma[\tau_h \theta] = - \frac{1}{|h|^{2\alpha}} \Gamma[\tau_h \theta] \le 0 . 
\end{align*}

For every \(T<T_{\max}\) fixed, we apply Lemma~\ref{lem:joint-maximum} on \([0,T]\) with
\[
w=v^2,\qquad b=u,\qquad c=\tau_h u.
\]
Indeed, smoothness gives 
\[
 \sup_{x\in \T^2, t\in [0,T]} v^2(x,h,t)=O(|h|^{2-2\alpha})\to 0
 \qquad\text{as }h\to0, 
\]
Thus $v^2$ extends continuously to $h=0$ by setting $v^2(x,0,t)=0$.
Moreover, \eqref{eq:Linf} gives
\[
  \sup_{x\in \T^2, t\in [0,T]} v^2(x,h,t)
 \le \frac{4M^2}{|h|^{2\alpha}}
 \to0
 \qquad\text{as }|h|\to\infty,
\]
Hence Lemma~\ref{lem:joint-maximum} and letting $T \uparrow T_{\max}$ yield
\[
\sup_{x\in\mathbb T^2, h\in\mathbb R^2}
v^2(x,h,t)
\le
\sup_{x\in\mathbb T^2, h\in\mathbb R^2}
v^2(x,h,0) ,
\qquad \forall \,  t \in [0,T_{\max}).
\]

Since $\theta$ is $2\pi$-periodic, the quantity
\[
\sup_{x\in\mathbb T^2,  h \neq 0}
\frac{|\tau_h\theta(x)|}{|h|^\alpha}
\]
is the usual $C^\alpha(\mathbb T^2)$ seminorm. Indeed, for any $h\in\R^2$, one may replace $h$ by a shortest representative $h-2\pi k$, $k\in\mathbb Z^2$, without changing the increment and only decreasing the denominator. Conversely, every pair of points in $\T^2$ admits such a shortest representative, so the two suprema agree.
Hence 
\begin{equation*}
\sup_{t \in [0, T_{\max})} [\theta(t)]_{C^\alpha}
\le
K
:= [\theta_0]_{C^\alpha} <\infty . 
\end{equation*}

\subsection{From H\"older control to a gradient bound}\label{sec:gradient}

 Differentiating
the SQG equation, taking the scalar product with \(\nabla \theta \), and using
the definition of $\Gamma$ componentwise, gives
\begin{equation}\label{eq:grad theta equation}
\frac12(\partial_t+u\cdot\nabla+\Lambda)|\nabla \theta|^2 
 =- \partial_i u_j \partial_i\theta \partial_j \theta - \cD[ \nabla \theta] 
 \le |\nabla u| |\nabla \theta|^2 - \cD[ \nabla \theta] . 
\end{equation}
By the elementary fact that, for $A,B \ge 0$ and $\alpha \in (0,1)$, 
\begin{equation*}
    \inf_{z > 0} \left(\sqrt{Az} + \frac{B}{z^{1-\alpha}}\right)  = C_\alpha A^{\frac{1-\alpha}{3-2\alpha}} B^{\frac{1}{3-2\alpha}} , 
\end{equation*}
optimizing the right-hand side of \eqref{eq:strain} gives
\begin{equation*}
    |\nabla u| \lesssim_\alpha \Gamma[\nabla \theta]^{\frac{1-\alpha}{3-2\alpha}} K^{\frac{1}{3-2\alpha}}. 
\end{equation*}
Moreover, by \eqref{eq:gradient-lower} we also have
\begin{equation*}
    |\nabla \theta|^2  \lesssim_\alpha \Gamma[\nabla \theta]^{\frac{2(1-\alpha)}{3-2\alpha}} K^{\frac{2}{3-2\alpha}} .
\end{equation*}
Hence, by the last two estimates and Young's inequality with exponents 
\begin{equation*}
    p= \frac{3-2\alpha}{3(1-\alpha)} , \qquad q=\frac{3-2\alpha}{\alpha} , 
\end{equation*}
we have 
\begin{align*}
    |\nabla u||\nabla \theta|^2 \lesssim_\alpha \Gamma[\nabla \theta]^{\frac{3(1-\alpha)}{3-2\alpha}} K^{\frac{3}{3-2\alpha}} \le \frac{1}{2} \Gamma[\nabla \theta] + C_\alpha K^{\frac{3}{\alpha}} . 
\end{align*}
Using this estimate in \eqref{eq:grad theta equation} gives
\begin{equation*}
\frac12(\partial_t+u\cdot\nabla+\Lambda)|\nabla \theta|^2 \le -    \frac{1}{2} \Gamma[\nabla \theta] + C_\alpha K^{\frac{3}{\alpha}} . 
\end{equation*}

We claim that the right-hand side is $\le 0$ whenever 
\begin{equation*}
    |\nabla \theta| \ge C K^{\frac{1}{\alpha}} , 
\end{equation*}
where $C>0$ is a sufficiently large constant that depends only on $\alpha$. Indeed, if this is the case, by \eqref{eq:gradient-lower} again
\begin{equation*}
    \Gamma[\nabla \theta] \gtrsim_\alpha  \frac{|\nabla \theta|^{\frac{3-2\alpha}{1-\alpha}}}{K^{\frac{1}{1-\alpha}}} \gtrsim_\alpha  C^{\frac{3-2\alpha}{1-\alpha}} K^{\frac{3}{\alpha}} \ge 2C_\alpha K^{\frac{3}{\alpha}} , 
\end{equation*}
for $C$ sufficiently large.

Thus there is a constant \(C_\alpha>0\) such that
\begin{equation}\label{eq:w-subsolution}
 (\partial_t+u\cdot\nabla+\Lambda) |\nabla \theta|^2 \le 0
 \qquad\hbox{wherever } |\nabla \theta|\ge C_\alpha K^{\frac{1}{\alpha}}.
\end{equation}
Now we can conclude by an application of the maximum principle. Define $w:=|\nabla \theta|^2$, fix \(A > \max \left\{\|\nabla\theta_0\|_{L^\infty}^2, C_\alpha^2 K^{2/\alpha} \right\} \) and set $\Phi(r)=(r-A)_+^3$. This is a convex function with $\Phi \ge 0$, $\Phi(0)=0$, and $\Phi(w(\cdot,0))=0$ since $A>w(\cdot,0)$. Since $\Phi'$ is supported in the region where \eqref{eq:w-subsolution} holds, by \eqref{eq:w-subsolution} and the Córdoba-Córdoba inequality (see \cite{CordobaCordoba2003, CC04} or \cite[Theorem 1.5]{CG26})
\[
 (\partial_t+u\cdot\nabla+\Lambda)\Phi(w)
 =\Phi'(w)(\partial_t+u\cdot\nabla+\Lambda)w
  -\big(\Phi'(w)\Lambda w-\Lambda\Phi(w)\big)
 \le0.
\]
Then, the maximum principle implies \(\Phi(w)\equiv0\). Letting
\[
A\downarrow \max \left\{\|\nabla\theta_0\|_{L^\infty}^2, C_\alpha^2 K^{2/\alpha} \right\}
\]
gives
\begin{equation*}
 \sup_{ t \in [0,T_{\max})} \|\nabla\theta(t)\|_{L^\infty}
 \le\max\left\{\|\nabla\theta_0\|_{L^\infty},
                 C_\alpha K^{1/\alpha}\right\}<\infty.
\end{equation*}
Moreover, since $K \le {\rm diam} (\T^2)^{1-\alpha} \|\nabla \theta_0\|_{L^\infty}$ and $\alpha$ has been chosen depending only on $M=\|\theta_0\|_{L^\infty}$, the uniform bound for $\|\nabla\theta(t)\|_{L^\infty}$ depends only on $\| \theta_0\|_{L^\infty}$ and $\|\nabla \theta_0\|_{L^\infty}$. 

\subsection{Conclusion} Let \([0,T_{\max})\) be the maximal interval of existence of the
classical solution.  The preceding argument gives the uniform estimate
\[
  \sup_{t \in [0,T_{\max})}
  \|\nabla\theta(t)\|_{L^\infty}
 \le C\big( \|\theta_0\|_{L^\infty},\| \nabla \theta_0\|_{L^\infty}\big) . 
\]
The continuation criterion for periodic critical SQG (see, for example, the few lines after \cite[Proposition~4.2]{CTV15}) therefore allows the solution to be
continued past any finite $T_{\max}$. Hence $T_{\max}=+\infty$, and
Theorem~\ref{thm:main} follows.

\appendix

\section{}
\label{app:joint-maximum}

The following argument is classical and, for example, essentially contained at the end of the proof of Theorem~4.3 in \cite{CTV15}. In that work the periodicity of
the increment is used to reduce the displacement variable to a compact
set. We record here the corresponding statement on $\R^2$, with two limiting
conditions to prevent a maximum from escaping through $h=0$ or
$|h|=\infty$.

\begin{lemma}\label{lem:joint-maximum}
Let $T>0$ and 
\[
 b,c:\T^2\times(\R^2\setminus\{0\})\times[0,T]\to\R^2
\]
be continuous. Suppose that
\[
 w:\T^2\times\R^2\times[0,T]\to[0,\infty)
\]
is continuous, and that on
$\mathcal{U} := \T^2\times(\R^2\setminus\{0\})\times(0,T]$ it is smooth. Assume
\begin{equation}\label{eq:joint-subsolution}
\bigl(\partial_t+b\cdot\nabla_x+c\cdot\nabla_h+\Lambda_x\bigr)w
 \le 0 \qquad \mbox{in } \, \mathcal{U} , 
\end{equation}
and that
\begin{equation}
 \lim_{|h|\to 0}
 \sup_{\substack{x\in\T^2, t\in[0,T] }}w(x,h,t) =0,    \qquad 
 \lim_{|h|\to \infty} 
 \sup_{\substack{x\in\T^2, t\in[0,T]}}w(x,h,t) =0.
 \label{eq: limits w}
\end{equation}
Then
\begin{equation}\label{eq:joint-maximum}
 \sup_{x\in\T^2,\,h\in\R^2}w(x,h,t)
 \le
 \sup_{x\in\T^2,\,h\in\R^2}w(x,h,0),
 \qquad 0\le t\le T.
\end{equation}
\end{lemma}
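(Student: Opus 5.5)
We will argue by contradiction with a strict perturbation, as in the standard maximum principle. Set $S_0:=\sup_{x,h}w(x,h,0)$; if the claim fails, there are $t_*\in(0,T]$ and a point where $w>S_0$. Fix $\varepsilon>0$ small and consider $w_\varepsilon(x,h,t):=w(x,h,t)-\varepsilon t$. Then $w_\varepsilon$ satisfies the strict inequality $(\partial_t+b\cdot\nabla_x+c\cdot\nabla_h+\Lambda_x)w_\varepsilon\le-\varepsilon<0$ on $\mathcal U$. If $\varepsilon$ is small enough, $\sup w_\varepsilon$ over $\T^2\times\R^2\times[0,T]$ still exceeds $S_0+\delta$ for some $\delta>0$.

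The first step is to show that this supremum is attained at an interior point of $\mathcal U$. Since $w\ge0$ and $\sup w_\varepsilon>S_0+\delta\ge\delta$, the limiting conditions \eqref{eq: limits w} give $r_0>0$ and $R_0<\infty$ such that $w_\varepsilon\le w<\delta/2$ whenever $|h|\le r_0$ or $|h|\ge R_0$, uniformly in $x,t$. Hence the supremum equals the supremum over the compact set $\T^2\times\{r_0\le|h|\le R_0\}\times[0,T]$. By continuity it is attained there at some $(x_0,h_0,t_0)$ with $h_0\neq0$. Moreover $t_0>0$, because at $t=0$ we have $w_\varepsilon\le S_0$.

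Next we read off the sign of each term at this maximum point, which lies in $\mathcal U$ where $w$ is smooth. Since $(x_0,h_0)$ is an interior maximum in the $(x,h)$ variables, $\nabla_x w(x_0,h_0,t_0)=0$ and $\nabla_h w(x_0,h_0,t_0)=0$. Since $t_0$ maximizes $t\mapsto w_\varepsilon(x_0,h_0,t)$ on $(0,T]$ (possibly at the endpoint $T$, where only a one-sided derivative is available), we get $\partial_t w_\varepsilon(x_0,h_0,t_0)\ge0$. Finally, $x_0$ is a global maximum of $w_\varepsilon(\cdot,h_0,t_0)$ on $\T^2$, so the singular integral representation of $\Lambda$ gives
\[
\Lambda_x w_\varepsilon(x_0,h_0,t_0)=c\,\mathrm{P.V.}\!\sum_{k}\int\frac{w_\varepsilon(x_0)-w_\varepsilon(y)}{|x_0-y-2\pi k|^{3}}\,dy\ge0 .
\]
Equivalently, the Poisson semigroup is positivity preserving and has mass one, so $\Lambda_x w=\lim_{z\to0}(w-P_zw)/z\ge0$ at a maximum. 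Summing these signs makes the left-hand side $\ge0$, which contradicts the strict inequality $\le-\varepsilon$. So $\sup w_\varepsilon\le S_0$ for every $\varepsilon$. Letting $\varepsilon\to0$ gives \eqref{eq:joint-maximum}.

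The main obstacle is the first step: localizing the maximum away from $h=0$ and $|h|=\infty$. This uses \eqref{eq: limits w} uniformly in $t$, together with the observation that the target supremum is strictly positive, which follows from $w\ge0$ and the contradiction hypothesis. A minor care point is that $b$ and $c$ are only defined for $h\neq0$; this is harmless because the maximum point has $h_0\neq0$.
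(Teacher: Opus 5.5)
Your proof is correct and follows essentially the paper's argument. You perturb to $w-\varepsilon t$, use the two limiting conditions to force the maximum into a compact region with $h\neq0$ and $t>0$, and then read off the signs of $\partial_t$, $\nabla_x$, $\nabla_h$ and $\Lambda_x$ there to contradict the strict subsolution inequality.
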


\begin{proof}
Let
\[
 M_0:=\sup_{x\in\T^2,\,h\in\R^2} w(x,h,0)  .
\]
Suppose that \eqref{eq:joint-maximum} fails at some time $T_0\in(0,T]$, that is 
\begin{equation*}
     \sup_{x\in\T^2,\,h\in\R^2} w(x,h, T_0)  
 > M_0 , 
\end{equation*}
and define $w_\ep := w- \ep t$. By \eqref{eq: limits w}, the maximum of
$w_\varepsilon$ on $\T^2\times\R^2\times[0,T_0]$ is attained at a point
$(x_*,h_*,t_*)$, and for $\ep$ sufficiently small its value is still larger than $M_0$; hence $h_*\ne0$ and
$t_*>0$. At this maximum point
\[
 \Lambda_x w_\varepsilon \ge 0, \qquad \nabla_x w_\varepsilon=0,
 \qquad
 \nabla_h w_\varepsilon=0,
 \qquad
 \partial_t w_\varepsilon\ge0,
\]
where at $t_*=T_0$ the last inequality is understood as a left
derivative.

Putting together the preceding inequalities, we obtain
\[
 \bigl(\partial_t+b\cdot\nabla_x+c\cdot\nabla_h
       +\Lambda_x\bigr) w_\varepsilon(x_*,h_*,t_*)\ge0.
\]
On the other hand, \eqref{eq:joint-subsolution} gives everywhere with
$h\ne0$ and $t>0$
\[
 \bigl(\partial_t+b\cdot\nabla_x+c\cdot\nabla_h
       +\Lambda_x\bigr) w_\varepsilon
 \le-\varepsilon<0,
\]
contradiction. 
\end{proof}

We shall also need the following standard Poisson kernel bounds.
\begin{lemma}
    Let $f \in C^\infty(\T^2)$. For all $z>0$ and $\alpha \in (0,1)$ we have 
\begin{align}
 \|\nabla P_z f\|_{L^\infty}+\|\nabla\cR P_z f\|_{L^\infty}
 &\le Cz^{-1}\|f\|_{L^\infty},\label{eq:poisson-Linf}\\
 \|\nabla P_z f\|_{L^\infty}+\|\nabla\cR P_z f\|_{L^\infty}
 &\le C_\alpha z^{\alpha-1}[f]_{C^\alpha}.\label{eq:poisson-holder}
\end{align}
\end{lemma}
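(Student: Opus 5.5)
We argue directly on the Fourier side, or equivalently with the periodized Poisson kernel. On $\T^2$ the semigroup $P_z$ is convolution with the periodic Poisson kernel $p_z(x)=\sum_{k\in\mathbb Z^2} \widetilde p_z(x+2\pi k)$, where $\widetilde p_z(x)=c\,z/(z^2+|x|^2)^{3/2}$ is the Euclidean kernel on $\R^2$. Likewise, $\cR P_z$ is convolution with the periodization of $\widetilde q_z:=\cR\widetilde p_z$, which is explicitly $c\,x/(z^2+|x|^2)^{3/2}$ (the conjugate Poisson kernel). Because constants are annihilated by $\nabla$ and by $\cR$ (the zero mode is removed), the periodic objects agree with the Euclidean operators applied to the periodic extension of $f$. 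The periodic sums converge absolutely after one derivative, and by the $x\mapsto -x$ symmetry also before.

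So the plan is to work on $\R^2$ with $f$ bounded, or H\"older, and periodic. We write $\nabla P_z f=\nabla\widetilde p_z * f$ and $\nabla\cR P_z f=\nabla\widetilde q_z * f$, where the convolutions are over $\R^2$.

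First we record the kernel bounds. Both $\widetilde p_z$ and $\widetilde q_z$ are dilations, $\widetilde p_z(x)=z^{-2}\widetilde p_1(x/z)$ and similarly for $\widetilde q_z$. Their gradients satisfy
\[
|\nabla \widetilde p_z(x)|+|\nabla\widetilde q_z(x)|\lesssim (z+|x|)^{-3},
\]
which is integrable on $\R^2$ with $L^1$ norm $\lesssim z^{-1}$. Moreover $\int_{\R^2}\nabla\widetilde p_z=\int_{\R^2}\nabla\widetilde q_z=0$, since both are derivatives of functions decaying like $|x|^{-2}$.

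For \eqref{eq:poisson-Linf} we simply apply Young's inequality: $\|\nabla\widetilde p_z*f\|_{L^\infty}\le\|\nabla \widetilde p_z\|_{L^1}\|f\|_{L^\infty}\lesssim z^{-1}\|f\|_{L^\infty}$, and the same for $\widetilde q_z$.

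For \eqref{eq:poisson-holder} we use the mean-zero property to write
\[
\nabla \widetilde p_z*f(x)=\int_{\R^2}\nabla\widetilde p_z(y)\bigl(f(x-y)-f(x)\bigr)\,dy .
\]
This gives the bound
\[
|\nabla \widetilde p_z*f(x)|\le [f]_{C^\alpha}\int_{\R^2}(z+|y|)^{-3}|y|^\alpha\,dy=C_\alpha z^{\alpha-1}[f]_{C^\alpha},
\]
since the integral converges for $\alpha<1$ and scales like $z^{\alpha-1}$. The argument for $\widetilde q_z$ is identical.

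The only point requiring care is the identification of the periodic operators with these Euclidean convolutions. In particular, $\cR$ on $\T^2$ must coincide with convolution against the (non-absolutely convergent) Riesz kernel applied to the periodic extension, up to the zero mode. We handle this on the Fourier side. The symbol of $\nabla\cR P_z$ is $i\xi\,(i\xi/|\xi|)e^{-z|\xi|}$, which is the restriction to $\mathbb Z^2$ of the Fourier transform of the Schwartz-tail kernel $\nabla\widetilde q_z\in L^1$. Poisson summation then shows that the Fourier multiplier on $\T^2$ equals convolution with the periodization of $\nabla\widetilde q_z$. This multiplier vanishes at $\xi=0$, consistent with the zero-mode convention. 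Everything above is standard, and no step is a genuine obstacle.
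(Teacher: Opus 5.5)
Your proof is correct. For $\nabla P_z f$ it coincides with the paper's argument. The paper bounds $\|\nabla^m P_z\|_{L^1(\T^2)}$ and $\int_{\T^2}|y|^\alpha|\nabla^m P_z(y)|\,dy$ by the Euclidean integrals, using $|y|\le|y+2\pi k|$ on the fundamental cell; this is your unfolding to $\R^2$. It then uses the vanishing mean exactly as you do.

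The genuine difference is the Riesz term. You introduce the conjugate Poisson kernel $\widetilde q_z$ and its pointwise bound $|\nabla\widetilde q_z|\lesssim (z+|x|)^{-3}$. You then use Poisson summation to identify the periodic multiplier of $\nabla\cR P_z$ with convolution against the periodization of $\nabla\widetilde q_z$.

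The paper never introduces $\widetilde q_z$. From $R_j\Lambda=\partial_j$, $\partial_s P_s=-\Lambda P_s$ and the fact that $R_j$ kills constants, it writes $\nabla R_jP_zf=\int_z^\infty \nabla\partial_jP_sf\,ds$. It then integrates the $m=2$ Poisson bounds $s^{-2}$ and $s^{\alpha-2}$ over $(z,\infty)$. This stays entirely inside the spectral semigroup calculus on $\T^2$. As a result, the step you single out as delicate disappears: identifying the periodic Riesz operator with a Euclidean kernel, and quoting $\widehat{\nabla\widetilde q_z}$. In exchange, your route gives an explicit pointwise kernel bound for $\nabla\cR P_z$.

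The two are really the same computation at different levels. The kernel-level version of the paper's identity, $\nabla\widetilde q_z=\int_z^\infty\nabla^2\widetilde p_s\,ds$, is precisely how one verifies the Fourier transform you quote.

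A minor point: $\nabla\widetilde q_z$ decays only like $|x|^{-3}$, so ``Schwartz-tail'' is inaccurate. However, $L^1$ is all that coefficient-wise Poisson summation requires.
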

\begin{proof}
The Euclidean Poisson kernel in dimension two is given by
\[
 p_z(y)=\frac{1}{2\pi}\frac{z}{(z^2+|y|^2)^{3/2}}, 
\]
and the periodic Poisson kernel on $\T^2$ is its periodization
\begin{equation}\label{eq: periodic Poisson}
 P_z(y)=\sum_{ k \in\mathbb Z^2}p_z(y+2\pi k ).
\end{equation}
Just scaling of the Euclidean kernel and periodization gives, for $m\ge 1$, 
\begin{align}
 \| \nabla^m P_z\|_{L^1(\mathbb T^2)}
 &\lesssim_m  z^{-m},                                      \label{eq:poisson-kernel-L1}\\
 \int_{\mathbb T^2}|y|^\alpha
       | \nabla^m P_z(y)|\,dy
 &\lesssim_{m, \alpha} z^{\alpha-m},
 \qquad 0<\alpha<1.                                    \label{eq:poisson-kernel-moment}
\end{align}
Indeed, using \eqref{eq: periodic Poisson} and that $|y| \le |y+2\pi k|$ for $y \in \T^2$ and $k\in \mathbb{Z}^2$, the left-hand sides are respectively bounded by 
\[
 \int_{\mathbb R^2}| \nabla^m p_z(y)|\,dy
 \quad\text{and}\quad
 \int_{\mathbb R^2}|y|^\alpha|\nabla^m p_z(y)|\,dy,
\]
and the asserted bounds follow by scaling with the change of variables \(y=z\xi\). Thus, \eqref{eq:poisson-kernel-L1} and
\eqref{eq:poisson-kernel-moment} imply
\begin{align}
 \|\nabla ^mP_zf\|_{L^\infty}
 &\lesssim_m z^{-m}\|f\|_{L^\infty},                       \label{eq:poisson-Dm-Linf}\\
 \|\nabla ^mP_zf\|_{L^\infty}
 &\lesssim_{m,\alpha}z^{\alpha-m}[f]_{C^\alpha}.
 \label{eq:poisson-Dm-holder}
\end{align}
Taking \(m=1\) gives the required bounds for \(\nabla P_zf\).

For the Riesz-transform terms, since \(R_j\Lambda=\partial_j\) and $R_j$ vanishes on constants, we have
\[
 \nabla R_jP_zf = - \nabla R_j \int_z^\infty \partial_s P_sf \, ds = \nabla R_j \int_z^\infty \Lambda P_s f \, ds 
 =\int_z^\infty \nabla \partial_j P_sf\,ds.
\]
Therefore, using \eqref{eq:poisson-Dm-Linf} with \(m=2\) gives
\[
 \|\nabla\cR P_zf\|_{L^\infty}
 \lesssim  \|f\|_{L^\infty}\int_z^\infty s^{-2}\,ds
 = z^{-1}\|f\|_{L^\infty},
\]
while \eqref{eq:poisson-Dm-holder} gives 
\[
 \|\nabla\cR P_zf\|_{L^\infty}
 \lesssim_\alpha [f]_{C^\alpha}
       \int_z^\infty s^{\alpha-2}\,ds
 = C_\alpha z^{\alpha-1}[f]_{C^\alpha} ,
\]
which completes the proof.
\end{proof}

\vspace{5pt}
\noindent\textbf{AI usage statement.}
The author used OpenAI's GPT-5.6 Sol in developing and proofreading this work. The author verified each step independently and rewrote it in the form presented here. The author assumes full responsibility for the correctness of the work. Access to the model was provided through OpenAI's ChatGPT for Academic Researchers program.

\bibliography{references}

@article {CZV16,
    AUTHOR = {Coti Zelati, Michele and Vicol, Vlad},
     TITLE = {On the global regularity for the supercritical {SQG} equation},
   JOURNAL = {Indiana Univ. Math. J.},
  FJOURNAL = {Indiana University Mathematics Journal},
    VOLUME = {65},
      YEAR = {2016},
    NUMBER = {2},
     PAGES = {535--552},
      ISSN = {0022-2518,1943-5258},
   MRCLASS = {35Q86 (35B65 35Q35 76B65 86A05 86A10)},
  MRNUMBER = {3498176},
MRREVIEWER = {Leonardo\ Marazzi},
       DOI = {10.1512/iumj.2016.65.5807},
       URL = {https://doi.org/10.1512/iumj.2016.65.5807},
}

@article{CG26,
  author        = {Caselli, M. and Gennaioli, L.},
  title         = {A new {D}uhamel-type principle with applications to geometric (in)equalities},
  journal       = {preprint arXiv:2603.29823},
  year          = {2026},
  eprint        = {2603.29823},
  archivePrefix = {arXiv},
  primaryClass  = {math.AP}
}

@article{CV12,
  author  = {Constantin, P. and Vicol, V.},
  title   = {Nonlinear maximum principles for dissipative linear nonlocal operators and applications},
  journal = {Geom. Funct. Anal.},
  volume  = {22},
  year    = {2012},
  pages   = {1289--1321}
}

@article{KNV07,
  author  = {Kiselev, A. and Nazarov, F. and Volberg, A.},
  title   = {Global well-posedness for the critical 2D dissipative quasi-geostrophic equation},
  journal = {Invent. Math.},
  volume  = {167},
  year    = {2007},
  pages   = {445--453}
}

@article{CMT94,
  author  = {Constantin, Peter and Majda, Andrew J. and Tabak, Esteban},
  title   = {Formation of strong fronts in the 2-D quasigeostrophic thermal active scalar},
  journal = {Nonlinearity},
  volume  = {7},
  number  = {6},
  pages   = {1495--1533},
  year    = {1994},
  doi     = {10.1088/0951-7715/7/6/001}
}

@article{CC04,
  author  = {C{\'o}rdoba, Antonio and C{\'o}rdoba, Diego},
  title   = {A Maximum Principle Applied to Quasi-Geostrophic Equations},
  journal = {Communications in Mathematical Physics},
  volume  = {249},
  pages   = {511--528},
  year    = {2004},
  doi     = {10.1007/s00220-004-1055-1}
}

@article{CV10,
  author  = {Caffarelli, Luis A. and Vasseur, Alexis},
  title   = {Drift diffusion equations with fractional diffusion and the quasi-geostrophic equation},
  journal = {Annals of Mathematics},
  volume  = {171},
  number  = {3},
  pages   = {1903--1930},
  year    = {2010},
  doi     = {10.4007/annals.2010.171.1903}
}

@article{CTV15,
  author  = {Constantin, Peter and Tarfulea, Andrei and Vicol, Vlad},
  title   = {Long Time Dynamics of Forced Critical {SQG}},
  journal = {Communications in Mathematical Physics},
  volume  = {335},
  number  = {1},
  pages   = {93--141},
  year    = {2015},
  doi     = {10.1007/s00220-014-2129-3}
}

@article{RoncalStinga16,
  author  = {Roncal, Luz and Stinga, Pablo Ra{\'u}l},
  title   = {Fractional {L}aplacian on the torus},
  journal = {Commun. Contemp. Math.},
  volume  = {18},
  number  = {3},
  year    = {2016},
  pages   = {1550033}
}

@article {CS07,
    AUTHOR = {Caffarelli, Luis and Silvestre, Luis},
     TITLE = {An extension problem related to the fractional {L}aplacian},
   JOURNAL = {Comm. Partial Differential Equations},
  FJOURNAL = {Communications in Partial Differential Equations},
    VOLUME = {32},
      YEAR = {2007},
    NUMBER = {7-9},
     PAGES = {1245--1260},
      ISSN = {0360-5302,1532-4133},
   MRCLASS = {35J70},
  MRNUMBER = {2354493},
MRREVIEWER = {Francesco\ Petitta},
       DOI = {10.1080/03605300600987306},
       URL = {https://doi.org/10.1080/03605300600987306},
}

@article{CordobaCordoba2003,
  author  = {C{\'o}rdoba, Antonio and C{\'o}rdoba, Diego},
  title   = {A pointwise estimate for fractionary derivatives with applications to partial differential equations},
  journal = {Proceedings of the National Academy of Sciences of the United States of America},
  volume  = {100},
  number  = {26},
  pages   = {15316--15317},
  year    = {2003},
  doi     = {10.1073/pnas.2036515100}
}

@article {KN09,
    AUTHOR = {Kiselev, A. and Nazarov, F.},
     TITLE = {A variation on a theme of {C}affarelli and {V}asseur},
   JOURNAL = {Zap. Nauchn. Sem. S.-Peterburg. Otdel. Mat. Inst. Steklov.
              (POMI)},
  FJOURNAL = {Rossi\u iskaya Akademiya Nauk. Sankt-Peterburgskoe Otdelenie.
              Matematicheski\u i\ Institut im. V. A. Steklova. Zapiski
              Nauchnykh Seminarov (POMI)},
    VOLUME = {370},
      YEAR = {2009},
     PAGES = {58--72, 220},
      ISSN = {0373-2703},
   MRCLASS = {35Q35 (35B65 35R11 76B03)},
  MRNUMBER = {2749211},
MRREVIEWER = {Benedetta\ Ferrario},
       DOI = {10.1007/s10958-010-9842-z},
       URL = {https://doi.org/10.1007/s10958-010-9842-z},
}
	 	 \bibliographystyle{alpha}

\end{document}